\newtheorem{thm}{Theorem}[section]
\newtheorem{lemma}{Lemma}[section]
\newtheorem{obs}{Observation}[section]
\newtheorem{claim}{Claim}[section]
\newtheorem{definition}{Definition}[section]
\newcommand{\ex}{\mathrm{ex}}
\title{Planar Tur\'an number of disjoint union of $C_3$ and $C_5$}
\author{Luyi Li
\and Ping Li
\and Guiying Yan
\and Qiang Zhou
}
\date{}
\begin{document}

\maketitle

\begin{abstract}
The planar Tur\'an number of $H$, denoted by $\ex_{\mathcal{P}}(n,H)$, is the maximum number of edges in an $n$-vertex $H$-free planar graph. The planar Tur\'an number of $k\geq 3$ vertex-disjoint union of cycles is the trivial value $3n-6$.
Let $C_{\ell}$ denote the cycle of length $\ell$ and $C_{\ell}\cup C_t$ denote the union of disjoint cycles $C_{\ell}$ and $C_t$.
The planar Tur\'an number $\ex_{\mathcal{P}}(n,H)$ is known if $H=C_{\ell}\cup C_k$, where $\ell,k\in \{3,4\}$. In this paper, we determine the value $\ex_{\mathcal{P}}(n,C_3\cup C_5)=\lfloor\frac{8n-13}{3}\rfloor$ and characterize the extremal graphs when $n$ is sufficiently large.\\
\textbf{Keywords:} Planar Tur\'an number, Disjoint cycles, Extremal graphs
\end{abstract}

\maketitle

\section{Introduction} 
Tur\'an-type problem is determining the maximum number of edges in a hereditary family of graphs, which is one of the most central topics in extremal graph theory. Let $\ex(n,H)$ denote the maximum number of edges among all $n$-vertex $H$-free graphs. In 1941, Tur\'an~\cite{turan} gave the exact value of $\ex(n,K_{r})$ and characterized the only extremal graph is the balanced complete $(r-1)$-partite graph, where $K_{r}$ is a complete graph with $r$ vertices. Later in 1946, Erd\H{o}s and Stone~\cite{erdos1946} generalized this result asymtotically by proving $\ex(n,H)=(1-\frac{1}{\chi(H)-1}+o(1))\binom{n}{2}$ for an arbitrary graph $H$, where $\chi(H)$ denotes the chromatic number of $H$.

In 2016, Dowden~\cite{dowden2016} initiated the study of Tur\'an-type problems when host graphs are planar graphs. We use $\ex_{\mathcal{P}}(n,H)$ to denote the maximum number of edges among all $n$-vertex $H$-free planar graphs. Dowden studied the planar Tur\'an numbers of $C_4$ and $C_5$, where $C_k$ is a cycle with $k$ vertices. Ghosh, Gy\H{o}ri, Martin, Paulos and Xiao~\cite{ghosh2022c6} gave the exact value for $C_6$. Shi, Walsh and Yu~\cite{shi2025planar}, and Gy\H{o}ri, Li and Zhou~\cite{győri2023c7} gave the exact value for $C_7$, independently. Until now, the planar Tur\'an number of $C_{k}$ is still unknown for $k\geq 8$.

Cranston, Lidick\'{y}, Liu and Shantanam~\cite{daniel2022counterexample} first gave the lower bound for general cycles as $\ex_{\mathcal{P}}(n,C_k) > \frac{3(k-1)}{k}n-\frac{6(k+1)}{k}$ for $k\geq 11$ and sufficiently large $n$. Lan and Song~\cite{lan2022improved} improved the lower bound to $\ex_{\mathcal{P}}(n,C_{k}) > (3-\frac{3-\frac{2}{k-1}}{k-6+\lfloor \frac{k-1}{2}\rfloor})n+c_{k}$ for $n\geq k\geq 11$, where $c_{k}$ is a constant. Recently, Gy\H{o}ri, Varga and Zhu~\cite{gyHori2024new} improved the lower bound by giving a new construction to show that $\ex_{\mathcal{P}}(n,C_{k}) \geq 3n-6-\frac{6\cdot3^{\log_23}n}{k^{\log_23}}$ for $k\geq 7$ and sufficiently large $n$. Shi, Walsh and Yu~\cite{shi2025dense} gave an upper bound and proved that $\ex_{\mathcal{P}}(n,C_{k}) \leq 3n-6-\frac{n}{4k^{\log_23}}$ for all $n,k\geq4$.

We use $tC_k$ to denote the union of $t$ vertex-disjoint $k$-cycles and $t\mathcal{C}$ to denote the union of $t$ vertex-disjoint cycles without length restrictions. Lan, Shi and Song~\cite{lan2019hfree} showed that $\ex_{\mathcal{P}}(n,t\mathcal{C})=3n-6$ for $t\geq 3$ and the extremal graph is a maximal planar graph, which also indicates that $\ex_{\mathcal{P}}(n,tC_k)=3n-6$ for $t,k\geq 3$. Later, Fang, Lin and Shi~\cite{fang2024} proved that $\ex_{\mathcal{P}}(n,2\mathcal{C})=2n-1$. For the planar Tur\'an number of two vertex-disjoint cycles under length restrictions, Lan, Shi and Song~\cite{lan2024planar} proved that $\ex_{\mathcal{P}}(n,2C_3)=\lceil 5n/2 \rceil - 5$ for $n\geq 6$. Li~\cite{li2024} proved that $\ex_{\mathcal{P}}(n,C_3\cup C_4)=\lfloor 5n/2 \rfloor - 4$ for $n\geq 20$, where $C_3\cup C_4$ denotes the union of two vertex-disjoint cycles $C_3$ and $C_4$. Fang, Lin and Shi~\cite{fang2024} proved that for $n\geq 2661$, $\ex_{\mathcal{P}}(n,2C_4)=19n/7 - 6$ if $7|n$ and $\ex_{\mathcal{P}}(n,2C_4)=\lfloor (19n-34)/7 \rfloor$ otherwise. They also settled the spectral planar Tur\'an number of $2C_k$ for all $k\geq 3$. Recently, Li~\cite{li2025dense} proved that $\ex_{\mathcal{P}}(n,2C_k)=\left[3-\Theta(k^{\log^23})^{-1}\right]n$ when $k\geq 5$ and $n\geq k^{\log_{2}3}$.

Let $G\vee H$ denote the graph obtained from $G$ and $H$ by joining all edges between $V(G)$ and $V(H)$. Let $P_k$ be a path with $k$ vertices.
In this paper, we consider the planar Tur\'an number of $C_3\cup C_5$.

\begin{thm}\label{thm}
    For $n\geq 295660$, $\ex_{\mathcal{P}}(n,C_3\cup C_5)=\lfloor\frac{8n-13}{3}\rfloor$ and the unique extremal graph is $K_2\vee (\lfloor\frac{n-2}{3}\rfloor P_3\cup P_{\{\frac{n-2}{3}\}})$, where $\{\frac{n-2}{3} \}=\frac{n-2}{3}-\lfloor\frac{n-2}{3}\rfloor$.
\end{thm}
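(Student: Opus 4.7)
\emph{Construction and lower bound.} Write $G_0=K_2\vee(\lfloor(n-2)/3\rfloor P_3\cup P_{\{(n-2)/3\}})$ and $V(K_2)=\{u,v\}$. I would first embed $G_0$ planarly by drawing the edge $uv$ and placing each path of the forest inside a distinct face created by the paths through $u,v$, and then compute directly that $e(G_0)=\lfloor(8n-13)/3\rfloor$. Since $G_0-\{u,v\}$ is a forest whose components each have at most $3$ vertices, every cycle in $G_0$ of length $\geq 5$ contains both $u$ and $v$, and every triangle contains at least one of them; hence every $C_5$ meets every triangle in $G_0$, so $G_0$ is $C_3\cup C_5$-free.

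\emph{Upper bound: reduction to the presence of a $C_5$.} Let $G$ be an $n$-vertex planar $C_3\cup C_5$-free graph with $e(G)\geq\lfloor(8n-13)/3\rfloor$ and $n\geq 295660$; the goal is $G\cong G_0$. If $G$ is $C_5$-free, Dowden's bound $\ex_{\mathcal{P}}(n,C_5)\leq 12(n-2)/5$ gives $e(G)<(8n-13)/3$ for $n\geq 11$, a contradiction. So $G$ contains some $C_5$, call it $C$. By hypothesis, $G-V(C)$ is triangle-free, so the planar triangle-free bound yields $e(G-V(C))\leq 2(n-5)-4=2n-14$, forcing at least roughly $(2n+29)/3$ edges incident to $V(C)$.

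\emph{Structural core.} The heart of the proof is to show $G$ contains two adjacent vertices $u,v$ with $\deg(u)=\deg(v)=n-1$, such that every component of $G-\{u,v\}$ is a path on at most $3$ vertices. The plan is to first extract one apex $u$ of high degree from the edge surplus incident to $V(C)$, which, combined with planarity, can be iterated across multiple $C_5$'s to localize the edge concentration on few vertices. Applying the argument to $G-u$ extracts a second apex $v$, and a maximality argument forces $uv\in E(G)$ and both apex degrees to equal $n-1$. Once the apexes are identified, planarity prevents any vertex of $G-\{u,v\}$ from having three or more neighbors there (else $\{u,v\}$ together with three common neighbors forms a $K_{3,3}$), so each component of $G-\{u,v\}$ is a path or a cycle. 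A cycle $C_k$ ($k\geq 3$) or a path $P_{\geq 4}$ inside $G-\{u,v\}$ combines with $u$ or $v$ to form a $C_5$ in $G$, and the abundance of $P_3$ components for $n$ large supplies a vertex-disjoint triangle through the other apex, yielding a forbidden $C_3\cup C_5$. Hence the components of $G-\{u,v\}$ lie in $\{P_1,P_2,P_3\}$.

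\emph{Edge count, uniqueness, and main obstacle.} With components restricted to $\{P_1,P_2,P_3\}$, a small optimization (where a $P_3$ component contributes $8/3$ edges per vertex, a $P_2$ contributes $5/2$, and a $P_1$ contributes $2$) forces the maximum to be attained by $\lfloor(n-2)/3\rfloor$ copies of $P_3$ plus at most one shorter remainder, giving $e(G)=\lfloor(8n-13)/3\rfloor$ and $G\cong G_0$. The main obstacle is the first half of the structural step: producing two apex vertices of degree $n-1$ that are adjacent, rather than just one. Extracting a single apex is relatively routine from the edge concentration bound, but locating a second apex and verifying $uv\in E(G)$ requires a careful case analysis of how non-apex vertices attach to multiple $C_5$'s while respecting planarity and the $C_3\cup C_5$-free constraint. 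The large threshold $n\geq 295660$ absorbs the accumulated slack from these iterated extractions and from the boundary configurations one must rule out.
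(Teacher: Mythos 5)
The lower-bound half of your proposal is fine: the construction, the planarity check, the verification that every $C_5$ in $G_0$ uses both vertices of the $K_2$ while every triangle uses at least one, and the final per-component edge accounting ($8/3$ per vertex for a $P_3$ component, etc.) all match what is needed. The end-game is also sound \emph{granted} the structural core: if $u,v$ are adjacent dominating vertices, the $K_{3,3}$ argument correctly limits components of $G-\{u,v\}$ to paths and cycles, and the $C_3\cup C_5$ argument correctly excludes cycles and $P_{\geq 4}$.

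The genuine gap is exactly the step you flag as the ``main obstacle,'' and it is not a boundary issue that a large $n$ absorbs --- it is the entire theorem. Your edge-concentration bound ($e(G-V(C))\leq 2(n-5)-4$, hence at least roughly $(2n+29)/3$ edges meeting $V(C)$) only produces a vertex of degree about $2n/15$, which is nowhere near $n-1$, and no mechanism is given for upgrading a linear-degree vertex to a dominating one, for producing a second such vertex, or for forcing them to be adjacent; ``iterated across multiple $C_5$'s'' and ``a maximality argument'' are placeholders, not arguments. The paper avoids this entirely: it never locates apex vertices for the upper bound. Instead it decomposes $G$ into triangular blocks and shows (Lemma~\ref{lemma1}) that one of three block configurations must occur; in two of them all triangles share a single vertex (Lemmas~\ref{lemma4} and~\ref{lemma5}), so $f_3\leq n-1$ and Euler's formula gives $e\leq(5n-9)/2$, a contradiction; in the remaining one, two fixed vertices $u,v$ lie in every $C_5$-carrying block, Lemma~\ref{lemma3} restricts all blocks of $G-e_{uv}$ to a short list satisfying $f_3(B)\leq e(B)/2$, and Euler's formula then gives $e\leq(8n-13)/3$ directly; the extremal graph falls out of the equality analysis (all blocks must be $B_5^2$). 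So the paper's two distinguished vertices arise as a by-product of a global $f_3$-versus-$e$ count, not from degree considerations. To complete your route you would need an independent proof that near-extremal graphs contain two adjacent vertices of degree $n-1$, which is a substantial missing piece; as written, the proposal assumes the conclusion's structure at its critical step.
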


\section{Preliminaries}
In this section, we first give some notation and terminology.
All graphs considered in this paper are simple and planar. We denote a simple graph by $G=(V(G),E(G))$ where $V(G)$ is the set of vertices and $E(G)$ is the set of edges. We refer to a planar embedding of a planar graph as \emph{plane graph}. Let $v(G)$ and $e(G)$ denote the number of vertices and edges in $G$, respectively. Let $\delta(G)$ and $\Delta(G)$ denote the minimum degree and maximum degree of $G$. Let $f(G)$ be the number of faces and $f_i(G)$ be the number of faces in $G$, called {\em $i$-faces}, whose boundary is a walk of length $i$. If there is no confusion, then we write $e(G),f(G)$ and $f_i(G)$ by $e,f$ and $f_i$, respectively.
For any subset $S\subseteq V(G)$, the subgraph induced by $S$ is denoted by $G[S]$. We denote by $G\backslash S$ the subgraph induced by $V(G)\backslash S$. If $S=\{v\}$, then we simply write $G\backslash v$. Given two graphs $G$ and $H$, $G$ is $H$-\emph{free} if it contains no subgraph isomorphic to $H$.
Let $W_{k}\coloneqq K_1\vee C_{k-1}$ denote the \emph{wheel} graph and $F_{k}\coloneqq K_1\vee P_{k-1}$ denote the \emph{fan} graph.
For more undefined notation, please refer to~\cite{graph-theory}.
Next, we introduce an important definition, that is \emph{triangular block} proposed in~\cite{ghosh2022c6}.

\begin{definition}
    Let $G$ be a plane graph, a triangular block $B$ in $G$ is defined as follows:
    \begin{enumerate}
        
        \item[(1)] Start with an edge $e$, set $E(B)=\{ e\}$;
        
        \item[(2)] Take $e'\in E(B)$ and search for a $3$-face containing $e'$. Add these other edge(s) in this $3$-face to $E(B)$;
        
        \item[(3)] Repeat step (2) till we cannot find a new $3$-face containing any edge in $E(B)$, then $B$ is called a triangular block.
    
    \end{enumerate}
\end{definition}

Given a triangular block $B$, we could regard $B$ as a plane graph, a {\em hole} of $B$ is a face of size at least four in $B$.
Let $G$ be a plane graph and let $\mathcal{B}$ be the family of all triangular blocks of $G$. Then it is easy to get that $$e(G)=\sum_{B\in \mathcal{B}}e(B) \text{ and } f_3(G)=\sum_{B\in \mathcal{B}}f_3(B).$$

We could classify triangular blocks by the number of vertices. Figure~\ref{triangular_blocks} shows all triangular blocks with at most $5$ vertices, where the subscript represents the number of vertices, and the superscript is used to distinguish triangular blocks that are not isomorphic when there are the same number of vertices:

\begin{figure}[H]
    \centering
    \includegraphics[width=0.6\linewidth]{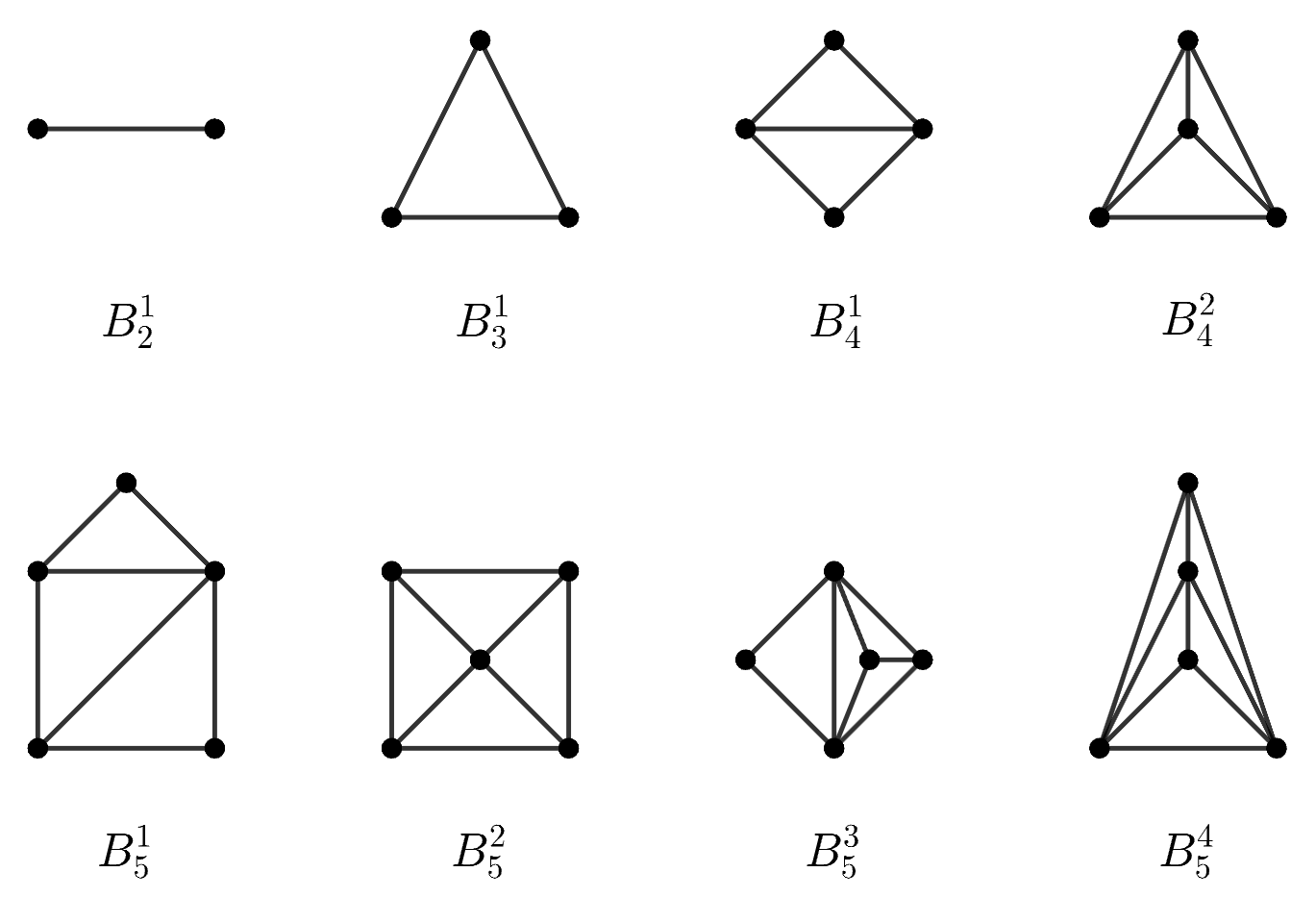}
    \caption{Triangular blocks with at most $5$ vertices.}
    \label{triangular_blocks}
\end{figure}

\section{Lemmas}
In this section, we prove some vital lemmas.
\begin{lemma}\label{lemma1}
    Let $G$ be a plane graph with $e(G)\geq \left\lfloor\frac{8n-13}{3}\right\rfloor$. If $G$ is $C_3\cup C_5$-free and $n\geq 295660$, then one of the following statements holds:
    \begin{enumerate}
        
        \item[(1)] $G$ has at least $3$ triangular blocks containing $C_5$ and intersecting exact two vertices;
        
        \item[(2)] $G$ has a triangular block with at least $521$ vertices;
        
        \item[(3)] $G$ has at least $4$ triangular blocks containing $C_5$ and intersecting exact one vertex.
    
    \end{enumerate}
\end{lemma}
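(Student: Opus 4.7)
The plan is to argue by contrapositive: assuming that (1), (2), and (3) all fail, I will derive $e(G) < \lfloor (8n-13)/3 \rfloor$, contradicting the hypothesis. A quick Euler-formula calculation converts the edge bound into face information: from $2e(G) \geq 3f_3(G) + 4(f(G) - f_3(G))$ and $f(G) = e(G) - n + 2$ one obtains $f_3(G) \geq 2e(G) - 4n + 8 \geq (4n-2)/3$, so $G$ has linearly many $3$-faces and hence many non-trivial triangular blocks. Also, $G$ cannot be $C_5$-free: otherwise Dowden's bound gives $e(G) \leq 12(n-2)/5 < (8n-13)/3$, a contradiction already for every $n \geq 1$. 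Thus the family $\mathcal{B}^{C_5}$ of triangular blocks containing some $C_5$-subgraph is nonempty.

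The second ingredient is that the $C_3 \cup C_5$-free hypothesis forces members of $\mathcal{B}^{C_5}$ to overlap strongly. Fix $B_0 \in \mathcal{B}^{C_5}$ together with a $C_5$-subgraph $C \subseteq B_0$; by the forbidden-subgraph condition every triangle of $G$ must share a vertex with $V(C)$. Hence for any other $B \in \mathcal{B}^{C_5}$, taking any triangle $T \subseteq B$ (which exists because $B$ is non-trivial) gives $V(B) \cap V(C) \supseteq V(T) \cap V(C) \neq \emptyset$. Under the negations of (1) and (3), each single vertex of $V(C)$ lies in at most three members of $\mathcal{B}^{C_5}$ and each pair of vertices of $V(C)$ in at most two, so pigeonholing over the five vertices of $C$ bounds $|\mathcal{B}^{C_5}|$ by an absolute constant (something like $11$).

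The edge count then splits naturally: $e(G) = \sum_{B \in \mathcal{B}^{C_5}} e(B) + \sum_{B \notin \mathcal{B}^{C_5}} e(B)$. Under $\neg$(2) every block satisfies $v(B) \leq 520$, so each $B \in \mathcal{B}^{C_5}$ contributes at most $3v(B) - 6 \leq 1554$ edges, and since $|\mathcal{B}^{C_5}|$ is bounded, the total from $\mathcal{B}^{C_5}$ is $O(1)$. For a non-trivial triangular block $B$ with no $C_5$, I rely on a separate structural claim that $v(B) \geq 5$ forces $B$ to be a book graph $K_2 \vee \overline{K_{v(B)-2}}$: informally, starting from a $3$-face $\{a,b,x\}$ and extending by $3$-face adjacency, any additional $3$-face not sharing the edge $ab$ quickly produces a $C_5$. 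Therefore such $B$ satisfy $e(B) \leq 2v(B) - 3$, while the remaining blocks with $v(B) \leq 4$ give at most $6$ edges each. Summing and noting that the blocks partition $E(G)$ while $\sum_B v(B)$ exceeds $n$ only by a bounded amount (each vertex lies in $O(1)$ members of $\mathcal{B}^{C_5}$, and vertex-sharing across $\mathcal{B}^{-}$-blocks is controlled by planarity plus the book structure), I obtain $e(G) \leq 2n + O(1) < \lfloor (8n-13)/3 \rfloor$ once $n \geq 295660$.

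The principal obstacle is tightening the last step. The structural classification of $C_5$-free triangular blocks as book graphs needs its own careful argument, because even modest extensions of $K_4$ or attachments of triangles along non-common edges force a $C_5$; ruling out all exotic triangulated configurations requires case analysis. Moreover, because the leading coefficients $2n$ and $(8n-13)/3$ differ only by $2n/3$, the numerical threshold $n \geq 295660$ must be squeezed out of the inequality by a meticulous bookkeeping of the additive overhead coming from the block-size cap $520$, the intersection limits $2$ and $3$ imposed by $\neg$(1) and $\neg$(3), and the multiplicity with which the vertices of $C$ are shared across $\mathcal{B}^{C_5}$.
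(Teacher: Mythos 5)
Your opening moves match the paper's strategy in spirit: argue by contradiction, extract a $C_5$ (call it $C$) from Dowden's bound, observe that every triangle --- hence every triangular block containing a $C_5$ --- must meet $V(C)$, and use the negations of (1) and (3) together with planarity to control how many $C_5$-blocks can intersect $V(C)$ in each fixed subset. (The paper runs this in the opposite direction: it first proves, via Euler's formula, that the number of blocks containing a $C_5$ grows linearly in $n$ when (2) fails, and then pigeonholes over the $\binom{5}{2}$ pairs of $V(C)$ to force conclusion (1). Your version, bounding $|\mathcal{B}^{C_5}|$ by a constant under $\neg(1)\wedge\neg(3)$, is logically equivalent once the linear lower bound is in place.) The problem is that your route to the final edge count has two genuine gaps.

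First, the structural claim is wrong. A $C_5$-free triangular block on at least $5$ vertices is not a book graph $K_2\vee\overline{K_{m}}$; in fact no such block exists. In a planar embedding of $K_2\vee\overline{K_m}$ with $m\geq 3$, only two of the $m$ triangles $uvw_i$ can be faces, so the book decomposes into one copy of $B_4^1$ plus trivial one-edge blocks --- it is not a single triangular block. The correct statement, which the paper uses, is that \emph{every} triangular block on at least $5$ vertices contains a $C_5$ (one checks the four $5$-vertex blocks directly), so the $C_5$-free blocks are exactly $B_2^1$, $B_3^1$, $B_4^1$ and $B_4^2=K_4$, and the $K_4$'s are bounded in number because any two of them share at most one vertex while each must meet $V(C)$ in at least two vertices.

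Second, and more seriously, the accounting $\sum_B v(B)\leq n+O(1)$ is false: triangular blocks partition $E(G)$, not $V(G)$, and a single vertex can lie in linearly many blocks. For instance a vertex of $V(C)$ carrying $\Theta(n)$ pairwise edge-disjoint triangles produces $\Theta(n)$ blocks $B_3^1$ all through that vertex, so $\sum_B v(B)-n=\Theta(n)$; nothing in $\neg(1)$, $\neg(2)$, $\neg(3)$ (which constrain only blocks \emph{containing a $C_5$}) prevents this. Consequently $e(G)\leq 2n+O(1)$ does not follow from your argument, and indeed the true extremal count $\lfloor\frac{8n-13}{3}\rfloor$ shows that per-block edge bounds of the form $e(B)\leq 2v(B)-3$ cannot be converted into a $2n+O(1)$ bound by vertex counting alone. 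The paper sidesteps this by working with faces instead of vertices: for $B\in\{B_2^1,B_3^1,B_4^1\}$ one has $f_3(B)\leq\frac{2}{5}e(B)$, the bounded families contribute $O(1)$ to $f_3$, hence $f_3(G)\leq\frac{2}{5}e(G)+O(1)$, and substituting into $2e\geq 4f-f_3=4(e+2-n)-f_3$ yields $e\leq\frac{5}{2}n+O(1)<\lfloor\frac{8n-13}{3}\rfloor$. You would need to replace your vertex-based bookkeeping with an argument of this type (or otherwise justify a sub-$\frac{8}{3}n$ bound) for the proof to close.
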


\begin{proof}
    Assume that statements (2) and (3) do not hold. Next we prove that statement (1) holds. Without loss of generality, suppose that $G$ is the planar embedding.
    Since $e(G)>\ex_{\mathcal{P}}(n,C_5)=\frac{12n-33}{5}$ when $n\geq 11$, $G$ contains a copy $D$ of $C_5$. Now we have the following claims:
    
    \begin{claim}\label{claim1}
        There are at most $10$ triangular blocks isomorphic to $B_4^2$ in $G$.
    \end{claim}
    \begin{proof}
        Note that each $B_4^2$ intersects at least two vertices of $D$. Otherwise, we could easily find $C_3\cup C_5$. Since any two blocks of $B_4^2$ intersect at most one vertex, it follows that there are at most $10$ triangular blocks isomorphic to $B_4^2$ in $G$.
    \end{proof}
    
    \begin{claim}\label{claim2}
        There are at least $f(n)$ triangular blocks containing $C_5$, where $f(n)=\frac{4n+15097}{15555}$.
    \end{claim}
    \begin{proof}
        Assume that there are only $\alpha=f(n)-1$ triangular blocks containing $C_5$. By Claim~\ref{claim1}, there are at most $10$ triangular blocks isomorphic to $B_4^2$ in $G$. By the assumption that statement (2) does not hold, the largest triangular block has at most $520$ vertices. The remaining triangular blocks that do not contain $C_5$ and are not isomorphic to $B_4^2$, denoted as $B_1,\ldots,B_t$, are isomorphic to $B_2^1$, $B_3^1$, and $B_4^1$. Then
        $$f_3(G) \leq \sum_{i\in[t]}f_3(B_i)+3\times10+\alpha\times (2\times 520-3) \leq \frac{2}{5}\sum_{i\in [t]}e(B_i)+1037\alpha+30 \leq \frac{2}{5}e(G)+1037\alpha+30,$$
        By Euler's formula, we have
        $$2e = \sum_{i}if_i \geq 3f_3+4(f-f_3) \geq 4(e+2-n)-\frac{2e}{5}-1037\alpha-30,$$
        which implies $e \leq \frac{5}{8}(4n+1037\alpha+22)=\frac{8n-16}{3}$, thus $e<\left\lfloor\frac{8n-13}{3}\right\rfloor$, a contradiction.   
    \end{proof}
    Since $G$ is $C_3\cup C_5$-free, each triangular block containing $C_5$ intersects with $D$.
    Recall that statement (3) does not hold. It follows that there are at most $3$ triangular blocks containing $C_5$ and intersecting exact one fixed vertex with $D$. Since $G$ is a plane graph, for any subset $A\subseteq V(D)$ with $|A|\geq 3$, there are at most two triangular blocks $B_1,B_2$ such that $B_i$ contains a $5$-cycle and $B_i\cap V(D)=A$ for $i=1,2$. Thus, there are at least 
    $$\frac{f(n)-3\times 5-2\big[\binom{5}{3}+\binom{5}{4}+\binom{5}{5}\big]}{\binom{5}{2}}=\frac{2n-357994}{77775}\geq 3$$ 
    triangular blocks that contain $C_5$ and intersect exactly two vertices. The result follows.
\end{proof}

A triangular block $B$ with at least $6$ vertices is \emph{good} if for any two nonadjacent vertices $u$ and $v$ in a hole of $B$, there exists a $C_5$ in $B$ such that $C_5$ contains at most one vertex of $\{u,v\}$. Otherwise, $B$ is \emph{bad}.





\begin{obs}\label{obs}
    Let $B$ be a good triangular block. Assume that $B^*$ is a triangular block obtained from $B$ by adding a new vertex in a hole of $B$. Then $B^*$ is also good.
\end{obs}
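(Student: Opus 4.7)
Write $w$ for the new vertex and $H$ for the hole of $B$ into which it is inserted, so $V(B^*)=V(B)\cup\{w\}$. Fix any nonadjacent pair $u,v$ on the boundary of a hole $H^*$ of $B^*$; the plan is to exhibit a $C_5\subseteq B^*$ containing at most one of $\{u,v\}$, by a case analysis on whether $w\in\{u,v\}$. The crucial structural remark is that, when $w$ and its incident edges are added, only the face $H$ of $B$ is modified, and the boundary walk of each new face created inside $H$ consists of $w$ together with a sub-path of the boundary walk of $H$.

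\emph{Case 1: $w\notin\{u,v\}$.} Then $u,v\in V(B)$, and since $B\subseteq B^*$ they remain nonadjacent in $B$. If $H^*$ is a hole inherited from $B$, then $u,v$ already lie on the boundary of a hole of $B$; and if instead $H^*$ is one of the new faces inside $H$, then by the structural remark (and the fact that $u,v\neq w$) both vertices lie on the boundary of the hole $H$ of $B$. Either way, the goodness of $B$ produces a $C_5\subseteq B\subseteq B^*$ containing at most one of $\{u,v\}$.

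\emph{Case 2: $v=w$.} Then $u\in V(B)$ is nonadjacent to $w$. The key point is that \emph{any} $C_5$ lying inside $B$ already does the job, since $w\notin V(B)$ forces such a $C_5$ to avoid $w$ and thus to contain at most one vertex of $\{u,w\}$. So it is enough to exhibit a single $C_5$ in $B$, which I would extract from goodness of $B$: since $B$ has at least $6$ vertices and contains the hole $H$, a short planarity argument (using that $K_5$ is non-planar, so the boundary of any hole with $\geq 5$ distinct vertices must contain a nonadjacent pair) produces a nonadjacent pair on the boundary of some hole of $B$, whereupon goodness supplies the required $C_5$. The only real obstacle is this last step of locating a $C_5$ inside $B$; once that is in place, the observation follows cleanly from the case analysis above.
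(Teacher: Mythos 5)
Your case analysis is exactly the paper's: if neither $u$ nor $v$ is the new vertex $w$, then both lie (nonadjacent) on a hole of $B$ and goodness of $B$ applies directly; if one of them is $w$, then any $C_5$ contained in $B$ automatically avoids $w$ and hence meets $\{u,w\}$ in at most one vertex. The only divergence is how you certify that $B$ contains a $C_5$ at all, and this is where your write-up is weaker than it needs to be. The paper simply invokes the structural fact (also used in the proof of Lemma~\ref{lemma4}) that a triangular block on at least $5$ vertices contains a $5$-cycle, which settles Case 2 immediately. Your detour through goodness can be made to work, but as written it has a small uncovered case: you produce a nonadjacent pair only on holes with at least $5$ distinct boundary vertices (via non-planarity of $K_5$), whereas the hole $H$ may be a $4$-face. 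That case is still fine --- two crossing chords of a face cycle cannot both be present in a plane graph, so a $4$-face $abcd$ omits at least one of $ac$, $bd$ and thus also carries a nonadjacent pair --- but you must either add this observation or, better, replace the whole step by the direct fact that triangular blocks on $\geq 5$ vertices contain $C_5$'s. With that one repair the argument is complete and essentially identical to the paper's.
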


\begin{proof}
    If $u,v$ are not adjacent and in a hole of $B^*$, then $u$ or $v$ is the new added vertex since $B$ is good. Let $u$ be the new added vertex. Then $v$ is contained in some $5$-cycle in $B$ since $|B|\geq 6$, a contradiction.
\end{proof}

\begin{figure}[H]
    \centering
    \includegraphics[width=0.5\linewidth]{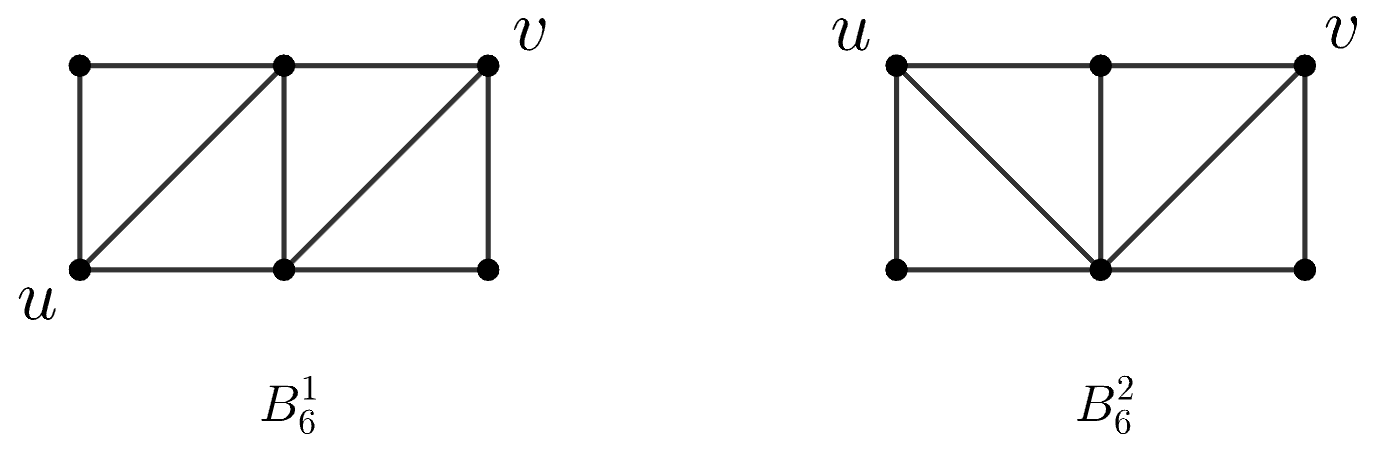}
    \caption{Two special triangular blocks in Lemma~\ref{lemma2}}
    \label{2_special_blocks}
\end{figure}

\begin{lemma}\label{lemma2}
    For a triangular block $B$ with at least $6$ vertices, $B$ is good or $B\in\{B_6^1,B_6^2\}$.
\end{lemma}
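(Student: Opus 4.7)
The plan is to argue by induction on $|V(B)|$.

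For the base case $|V(B)| = 6$, I would enumerate all triangular blocks on $6$ vertices up to isomorphism. Planarity combined with the requirement that every edge lies in a triangular $3$-face severely restricts the possibilities; the candidates are essentially $F_6 = K_1 \vee P_5$, the wheel $W_5 = K_1 \vee C_5$, the ``triangle strip'' $T_6$ (four consecutive triangles glued along alternating edges), $F_6$ augmented by a chord of its outer $6$-hole, two copies of $K_4$ sharing an edge, the octahedron $K_{2,2,2}$, and the maximal planar graph $K_2 \vee P_4$. For each candidate I would list all $C_5$ subgraphs, identify the holes (faces of size at least $4$), and test whether some pair of nonadjacent hole vertices lies in every $C_5$. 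A direct check then shows that only $F_6$ (with the nonadjacent pair $(3,5)$ on its outer $6$-hole) and $T_6$ (with the nonadjacent pair $(2,5)$ on its outer $6$-hole) are bad, and these are exactly the $B_6^1$ and $B_6^2$ pictured in Figure~\ref{2_special_blocks}.

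For the inductive step $|V(B)| \geq 7$, the idea is to ``peel off'' one vertex of $B$ in order to invoke the contrapositive of Observation~\ref{obs}. Specifically, I would locate a vertex $w \in V(B)$ such that $B' \coloneqq B - w$ is a triangular block with $|V(B')| \geq 6$, and $B$ is recovered from $B'$ by inserting $w$ into a hole of $B'$ in the sense of the Observation. A natural choice is a degree-$2$ vertex $w$ whose two neighbors $x, y$ are adjacent (so $\{w, x, y\}$ is a triangular face of $B$) and whose other incident face is a hole; removing $w$ then merges the triangle $wxy$ with the hole into a single hole of $B'$, while the rest of the triangular structure is preserved. In full generality, a short structural argument on the outermost triangle in the construction of $B$ exhibits such a $w$ even for blocks lacking a low-degree vertex. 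Applying Observation~\ref{obs} in contrapositive form then gives that $B'$ is bad, so by the inductive hypothesis $B' \in \{B_6^1, B_6^2\}$, forcing $|V(B)| = 7$.

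To close the argument I would carry out a finite case analysis over all $7$-vertex triangular blocks obtained by adding a new vertex $w$ into a hole of $B_6^1$ or $B_6^2$, classified by the contiguous arc of hole-boundary vertices that $w$ attaches to. For each resulting block I would enumerate the new $C_5$ subgraphs, identify the new hole, and verify directly that every nonadjacent pair of hole vertices admits a $C_5$ avoiding one of them, which contradicts the badness of $B$. The main obstacles are (i) the base-case enumeration together with its direct $C_5$/hole verifications; (ii) exhibiting a removable vertex $w$ in full generality, which is nontrivial for triangular blocks without a degree-$2$ vertex (such as wheel-like structures); and (iii) the exhaustive $7$-vertex verification. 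A minor technical point is that the Observation implicitly uses that every good triangular block with at least $6$ vertices contains a $C_5$; this can be checked along the way as part of the base-case classification.
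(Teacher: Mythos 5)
Your proposal follows essentially the same route as the paper: a finite classification of the $6$-vertex triangular blocks identifying $B_6^1$ and $B_6^2$ as the only bad ones, a check that every $7$-vertex block extending these two is good, and Observation~\ref{obs} to dispose of all larger blocks. The only real difference is presentational --- you run the induction downward by peeling off a vertex, whereas the paper generates blocks upward by vertex insertion --- and the completeness step you flag as obstacle (ii) (that every triangular block on $k\geq 7$ vertices arises from one on $k-1$ vertices by inserting a single vertex into a face) is precisely the step the paper also relies on but leaves implicit.
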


\begin{proof}
    We could obtain all triangular blocks with $6$ vertices by adding a new vertex to each triangular block with $5$ vertices.

    There are $9$ triangular blocks formed by adding a new vertex to $B_{5}^{1}$ as shown in Figure~\ref{B51}.

    \begin{figure}[H]
        \centering
        \includegraphics[width=0.9\linewidth]{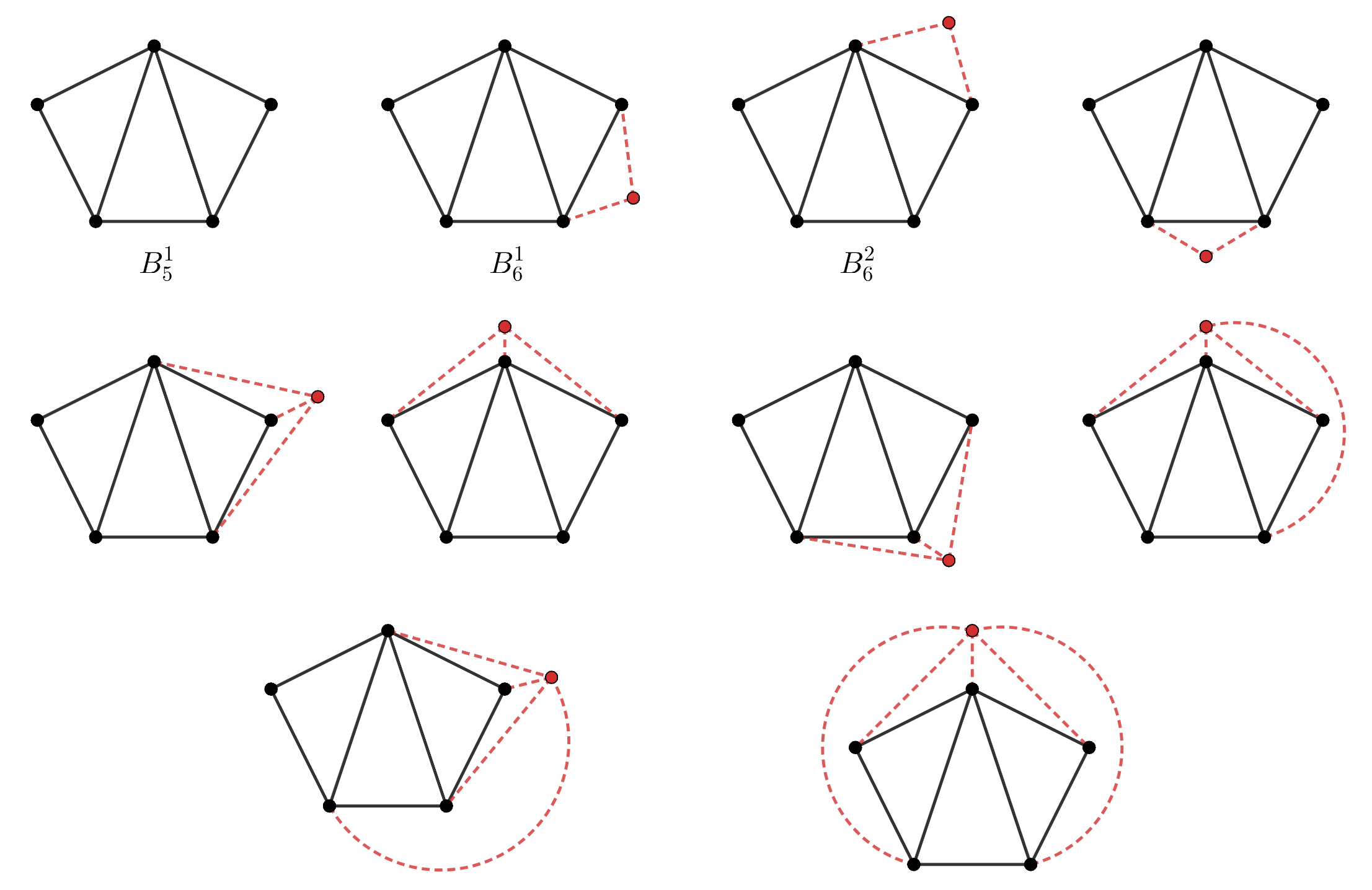}
        \caption{Triangular blocks with $6$ vertices coming from $B_{5}^{1}$}
        \label{B51}
    \end{figure}

    There are $3$ triangular blocks formed by adding a new vertex to $B_{5}^{2}$ as shown in Figure~\ref{B52}.
    
    \begin{figure}[H]
        \centering
        \includegraphics[width=0.9\linewidth]{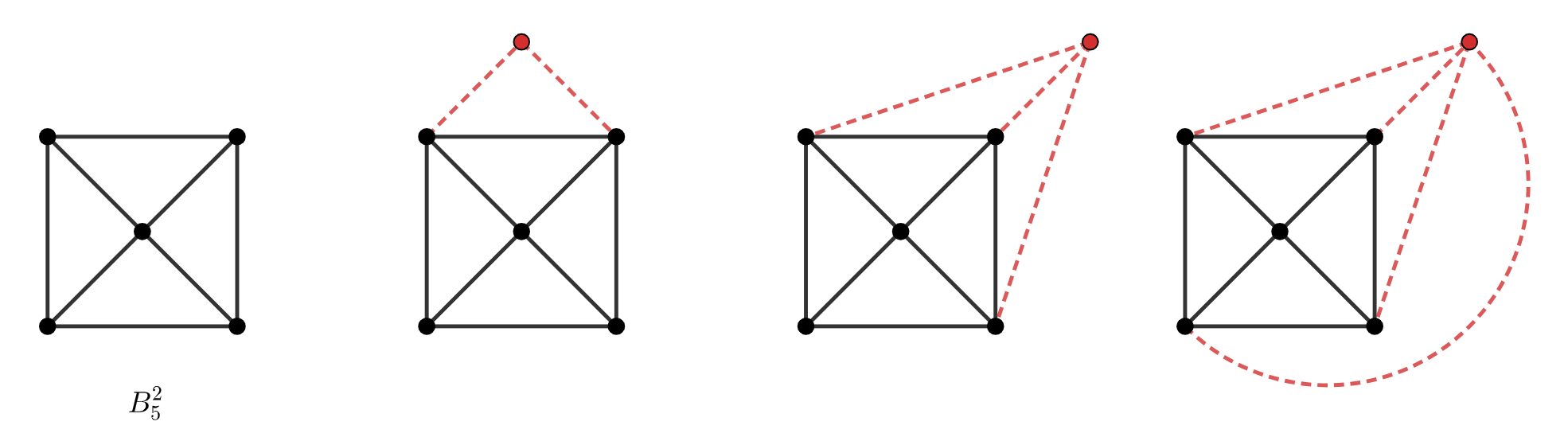}
        \caption{Triangular blocks with $6$ vertices coming from $B_{5}^{2}$}
        \label{B52}
    \end{figure}

    There are $6$ triangular blocks formed by adding a new vertex to $B_{5}^{3}$ as shown in Figure~\ref{B53}.

    \begin{figure}[H]
        \centering
        \includegraphics[width=0.9\linewidth]{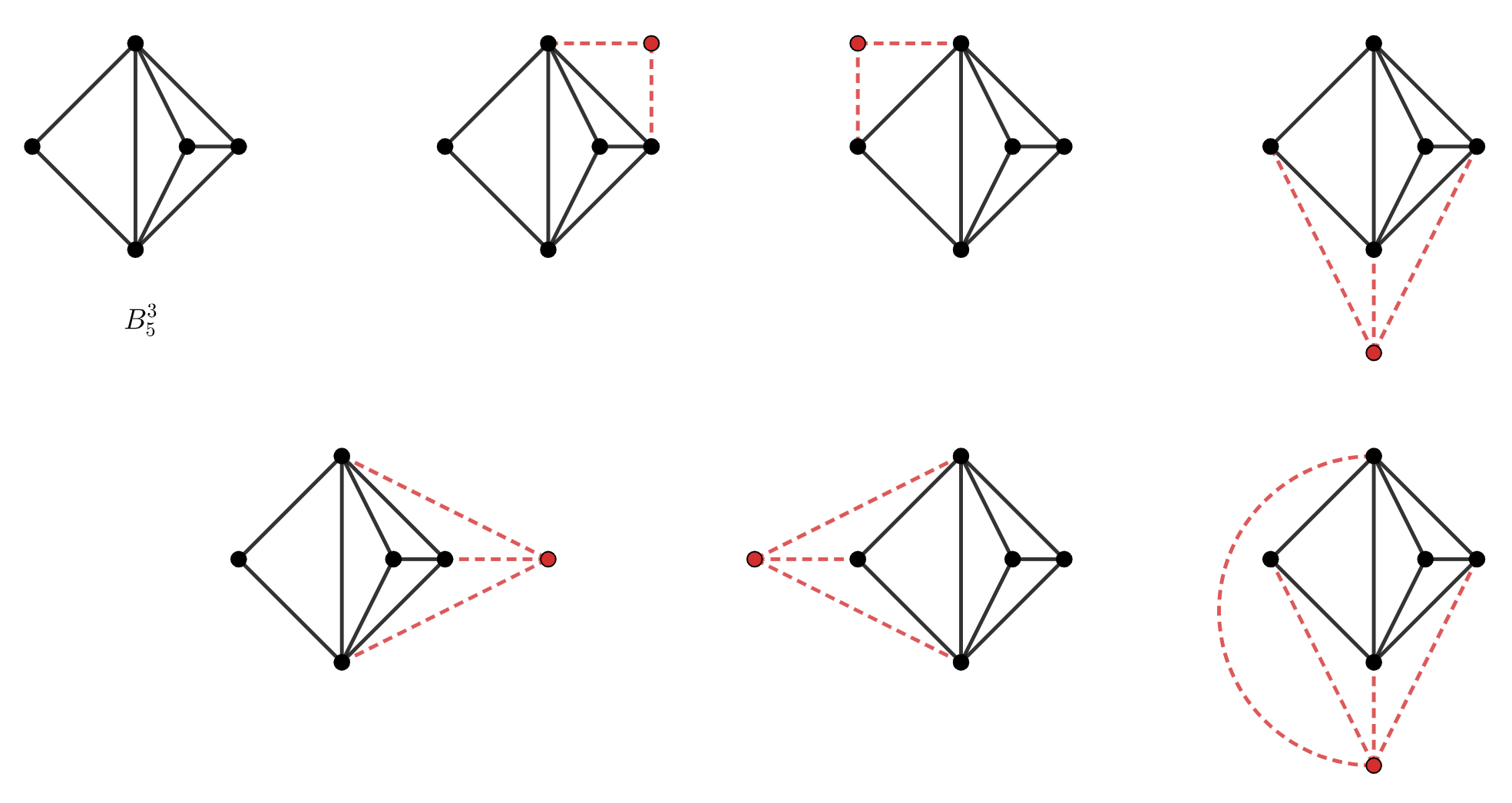}
        \caption{Triangular blocks with $6$ vertices coming from $B_{5}^{3}$}
        \label{B53}
    \end{figure}

    There are $3$ triangular blocks formed by adding a new vertex to $B_{5}^{4}$ as shown in Figure~\ref{B54}.

    \begin{figure}[H]
        \centering
        \includegraphics[width=0.8\linewidth]{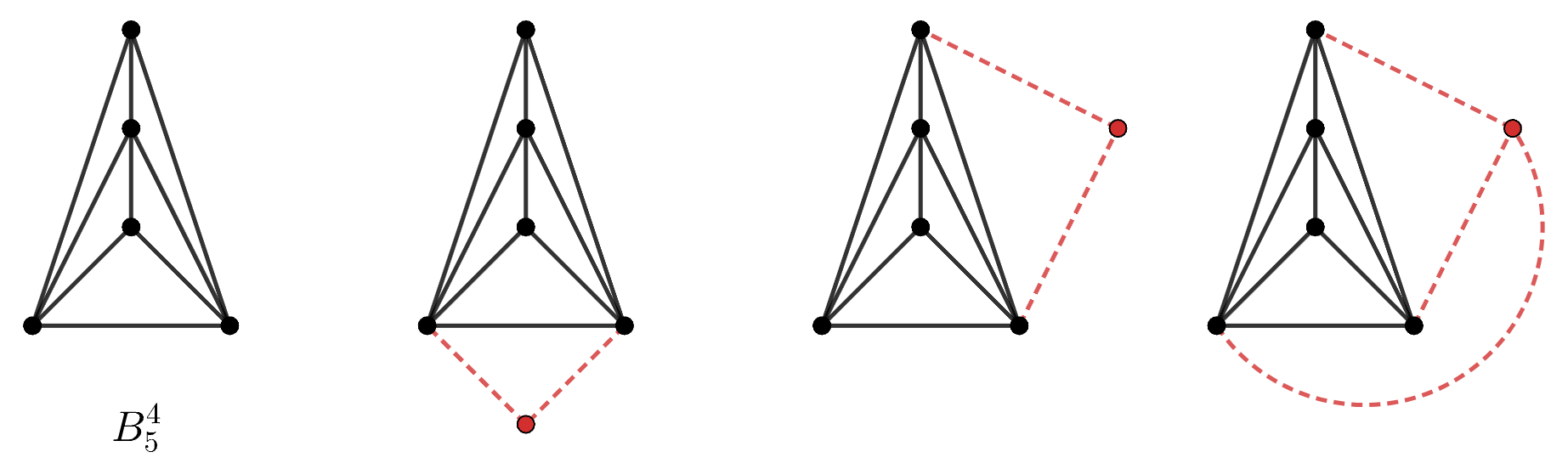}
        \caption{Triangular blocks with $6$ vertices coming from $B_{5}^{4}$}
        \label{B54}
    \end{figure}

    For the second and third graphs ($B_6^1$ and $B_6^2$) in Figure~\ref{B54}, it is not difficult to find that only these two triangular blocks are bad.
    Note that a triangular blocks with $7$ vertices obtained from $B_6^i$ by adding a new vertex in a hole of $B_6^i$ is good for $i=1,2$.
    From Observation~\ref{obs}, only $B_{6}^{1}$ and $B_{6}^{2}$ shown in Figure~\ref{2_special_blocks} are bad.
\end{proof}

\begin{lemma}\label{lemma3}
    Suppose that $G$ satisfies statement (1) of Lemma~\ref{lemma1}. Let $u,v$ be the two vertices. Let $B_1,\ldots,B_t$ be the triangular blocks containing $\{u,v\}$ after removing the edge $e_{uv}$(if exists). Then $B_i\in\{B_4^1,B_5^1,B_5^2,B_5^3,B_6^1,B_6^2\}$ for $i=1,2,...,t$.
\end{lemma}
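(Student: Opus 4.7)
The plan is to argue by contradiction: assume some $B_i$ lies outside the allowed list and exhibit a copy of $C_3 \cup C_5$ in $G$. Since $e_{uv}$ has been removed, $u$ and $v$ are non-adjacent inside any $B_i$, which immediately excludes the triangular blocks $B_2$, $B_3^1$, and $B_4^2 = K_4$; hence the statement is trivial whenever $|B_i| \le 4$. For the remaining cases I would exploit Lemma~\ref{lemma1}(1), which furnishes three triangular blocks $B_1^*, B_2^*, B_3^*$ of $G$, each containing a $C_5$ and meeting $D$ exactly in $\{u,v\}$. Because at most one triangular block of $G$ can contain the edge $uv$ (each edge lies in a unique block), at least two of the $B_j^*$ persist as triangular blocks of $G - e_{uv}$, and I would use them as a reservoir of $C_5$'s or triangles to pair against a cycle taken from $B_i$.

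A key structural observation I would record first is that any two distinct triangular blocks of $G - e_{uv}$ both containing $\{u,v\}$ share no vertex beyond $\{u,v\}$: no shared edge is possible by the uniqueness property above, and any additional shared vertex would, in combination with the non-adjacency of $u$ and $v$, force an overlapping configuration violating planarity. Granted this, consider the main case $|B_i| \ge 6$ with $B_i \notin \{B_6^1, B_6^2\}$. By Observation~\ref{obs} and Lemma~\ref{lemma2}, $B_i$ is good; since $u$ and $v$ are non-adjacent in the plane graph $B_i$ they lie on the boundary of a common hole, so goodness supplies a $C_5 \subseteq B_i$, call it $C$, with $|V(C) \cap \{u,v\}| \le 1$, say $v \notin V(C)$. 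The intersection observation then gives $V(C) \cap V(B_j^*) \subseteq \{u\}$ for each relevant $j$, so any triangle of $B_j^*$ avoiding $u$ yields the forbidden $C_3 \cup C_5$. Such a triangle exists unless every triangle of $B_j^*$ meets $u$, i.e.\ $B_j^*$ is a fan with apex $u$; ruling this out simultaneously for two different $B_j^*$ sharing the non-adjacent pair $\{u,v\}$ comes from a local planarity argument on the rotational order of the blocks around $u$.

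The last remaining case is $|B_i| = 5$ with $B_i = B_5^4$, which I would settle by direct inspection of $B_5^4$ (see Figure~\ref{B54}): it contains a triangle avoiding any two designated non-adjacent vertices, and pairing this triangle with any $C_5 \subseteq B_j^*$ again produces $C_3 \cup C_5$ by the shared-vertex observation (the triangle avoids $\{u,v\}$, hence avoids $V(B_j^*)$ entirely). The main obstacle I foresee is making the shared-vertex claim rigorous — it is the structural linchpin of the whole argument — and handling the fan degenerate sub-case in the main step; both seem to require a careful analysis of the cyclic order in which the various triangular blocks through $\{u,v\}$ appear in the plane embedding at $u$ and at $v$, together with the $C_3 \cup C_5$-freeness used to preclude competing fan structures.
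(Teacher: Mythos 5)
Your overall strategy coincides with the paper's: Lemma~\ref{lemma1}(1) supplies three blocks through $\{u,v\}$, at most one of which is destroyed by deleting $e_{uv}$, so $t\ge 2$; a block $B_i$ outside the list then yields, via Lemma~\ref{lemma2}, a $C_5$ meeting at most one of $u,v$, which is paired with a triangle from a second surviving block. The paper's proof is exactly this, in three sentences. The genuine problem with your version is the ``shared-vertex observation'' that two distinct triangular blocks of $G-e_{uv}$ through $\{u,v\}$ intersect only in $\{u,v\}$, which you justify by planarity alone and then use as the linchpin (your words) of every case. That claim is false as a statement about plane graphs: take the two fans $\{a\}\vee(u\,x\,v\,y\,w)$ and $\{b\}\vee(u\,x'\,v\,y'\,w)$, nested one in the outer face of the other. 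They are distinct triangular blocks, share no edge, have $u\not\sim v$ in each, and share the three vertices $u,v,w$. So planarity does not deliver the observation, and since you explicitly leave it unproved, the argument as written does not close. What you actually need is weaker: having found $C\subseteq B_i$ with, say, $v\notin V(C)$, take as your triangle any $3$-face of $B_j^*$ incident with $v$ (one exists, since every edge of a triangular block lies in a $3$-face). Because $u\not\sim v$ in $G-e_{uv}$, that $3$-face automatically misses $u$; it then remains only to ensure its other two vertices miss $V(C)$, which is a much more local disjointness statement than your blanket claim and is where $C_3\cup C_5$-freeness (or a choice among the several surviving blocks) should be spent.

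Two smaller points. First, the ``fan with apex $u$'' sub-case you flag as an obstacle is vacuous: a fan with apex $u$ containing $v$ would make $u$ adjacent to $v$, contradicting the deletion of $e_{uv}$; the observation above (pick the $3$-face at the vertex the $C_5$ misses) removes the need to hunt for a triangle avoiding $u$ in the first place. Second, your assertion that non-adjacency of $u$ and $v$ in $B_i$ places them on a common hole of $B_i$ is not automatic either — a triangular block can be a triangulation with no holes at all and still contain non-adjacent pairs. Here it does follow, but only because a second block through $u$ and $v$ lies outside $B_i$ in the embedding and forces $u$ and $v$ onto a common face of $B_i$ of length at least $4$; this deserves a sentence (the paper omits it too). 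With the shared-vertex claim replaced by the local disjointness argument and these two points patched, your outline matches the paper's intended proof.
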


\begin{proof}
    Since there is at most one triangular block will break after removing the edge $e_{uv}$(if exists), then $t\geq 2$. Without loss of generality, if $B_1$ is not any given triangular block. Then by Lemma~\ref{lemma2}, we could find a $C_5$ in $B_1$ that only intersects at most one vertex of $\{ u, v \}$. Then we could find $C_3\cup C_5$ in $B_1$ and $B_2$.
\end{proof}

\begin{lemma}\label{lemma4}
    If $G$ is $C_3\cup C_5$-free and contains one triangular block $B$ with $|V(B)|\geq 521$, then $B$ is a wheel or a fan.
\end{lemma}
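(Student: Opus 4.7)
The plan is to prove by contradiction: assume $B$ is a triangular block with $|V(B)|\ge 521$ and that $B$ is neither a wheel nor a fan, and aim to produce a $C_3\cup C_5$ inside $B\subseteq G$. As setup, I would record that $B$ is $2$-connected (two triangles meeting only in a vertex would lie in different triangular blocks by definition, so $B$ has no cut vertex), that every edge of $B$ lies in a $3$-face, and that by Lemma~\ref{lemma2} combined with Observation~\ref{obs}, $B$ is \emph{good}, since $|V(B)|\ge 521>6$ rules out $B\in\{B_6^1,B_6^2\}$.

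The crux of the argument is to show that all $3$-faces of $B$ share a common vertex $c$. Assume otherwise. Then $B$ contains two vertex-disjoint $3$-faces $T_1,T_2$: because $B$ is not a wheel or fan and $|V(B)|$ is far larger than the small bad blocks classified in Lemma~\ref{lemma2}, a structural unpacking of the $3$-face adjacency graph of $B$ yields such a pair. The goal is then to locate a $C_5$ in $B$ vertex-disjoint from one of $T_1,T_2$, which together with the other triangle furnishes a forbidden $C_3\cup C_5$. I plan to exhibit such a $C_5$ by two complementary routes: first, a counting argument that compares $e(B\setminus V(T_1))$ with Dowden's bound $\ex_{\mathcal{P}}(m,C_5)\le(12m-33)/5$ and produces a $C_5$ in $B\setminus V(T_1)$ unless the three vertices of $T_1$ are incident to an enormous number of edges; second, in that residual high-degree case, I would invoke the good-block property applied to a hole of $B$ (which exists because $B$ is not maximal planar), pick two non-adjacent vertices in that hole, extract a $C_5$ avoiding one of them, and then pair that $C_5$ with a suitable $3$-face disjoint from it. Both routes produce a $C_3\cup C_5\subseteq B\subseteq G$, the desired contradiction.

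Once all $3$-faces contain a common vertex $c$, every edge of $B$ is either incident to $c$ or of the form $uv$ with $cuv$ a $3$-face, so every vertex of $V(B)\setminus\{c\}$ is a neighbor of $c$, and the edges of $B-c$ are precisely the pairs $u_iu_{i+1}$ of neighbors consecutive in the cyclic order around $c$ whose bounding face is a triangle. Since $B$ is $2$-connected, $c$ is not a cut vertex, so $B-c$ is connected; being a connected subgraph of the cycle formed by $c$'s neighbors in cyclic order, $B-c$ is either that full cycle (making $B$ a wheel) or a path obtained by deleting exactly one edge of that cycle (making $B$ a fan), contradicting the standing assumption. The main obstacle I foresee is the explicit construction of a $C_5$ vertex-disjoint from $T_1$ when a vertex of $V(T_1)$ has huge degree: the counting bound degrades and one must squeeze the good-block property in a delicate way, with a possible further case analysis on the length of the shortest chain of $3$-faces between $T_1$ and $T_2$; this balance is what pins down the precise threshold $521$.
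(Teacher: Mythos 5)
There is a genuine gap, in fact two. First, the central dichotomy you rely on --- either all $3$-faces of $B$ share a common vertex, or $B$ contains two vertex-disjoint $3$-faces --- is asserted but not proved, and it is not a formal triviality: a family of triangles can be pairwise intersecting without having a common vertex (e.g.\ three triangles pairwise sharing three distinct vertices), so ``not all $3$-faces share a vertex'' does not hand you a disjoint pair. Establishing something like this dichotomy for a large triangular block is precisely where the difficulty of the lemma lives, and your ``structural unpacking of the $3$-face adjacency graph'' is a placeholder for it. Second, even granting two disjoint $3$-faces $T_1,T_2$, neither of your routes to a $C_5$ disjoint from one of them works as described. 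Route 1 fails at the first step: a triangular block on $m$ vertices can have as few as about $2m$ edges (a fan, or a strip of triangles glued along edges, has $2m-3$ edges), which is already below Dowden's threshold $\frac{12m-33}{5}$, so $e(B\setminus V(T_1))$ need not force a $C_5$ regardless of how small the degrees of $T_1$'s vertices are. Route 2 overreads the ``good'' property: it only supplies a $C_5$ missing at least one of two prescribed nonadjacent vertices of a hole, which is far from a $C_5$ avoiding all three vertices of a triangle, and you give no mechanism for then finding a $3$-face disjoint from that $C_5$.

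For comparison, the paper's proof takes a different split. If $\Delta(B)\leq 105$, then since $|V(B)|\geq 521$ some vertex lies at distance $2$ from a $5$-cycle $C$, and the $3$-face through that vertex is disjoint from $C$ --- this is where the constant $521$ actually comes from, not from any balance between counting and the good-block property. If $\Delta(B)\geq 106$, the paper first shows that containing $K_1\vee P_{16}$ forces all $3$-faces through one vertex (this is the rigorous version of your final ``common vertex implies wheel or fan'' step, which is fine in spirit); otherwise the $3$-faces at a maximum-degree vertex $v$ split into at least $8$ fans, which forces every $5$-cycle through $v$, and a contradiction is then extracted from how the fans must attach to a fixed $5$-cycle through $v$. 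Your closing reduction from ``all $3$-faces share $c$'' to ``wheel or fan'' is essentially correct, but the two load-bearing steps before it are missing or broken.
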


\begin{proof}
    Suppose to the contrary that $B$ is not a wheel or a fan. Since $|V(B)|\geq 5$ and $B$ is a triangular block, $B$ contains cycles of length five. If $\Delta(B)\leq 105$, then there exists a vertex $u$ such that the distance between $u$ and some $5$-cycle $C$ in $B$ is $2$, since $|V(B)|\geq 521$. By the definition of triangular block, $u$ is contained in some $3$-face. However, this $3$-face is not intersecting with $C$, which contradicts to the hypothesis that $G$ is $C_3\cup C_5$-free. Thus we may assume that $\Delta(B)\geq 106$. Let $v$ be a vertex in $B$ with $d_B(v)=\Delta(B)$.
    
    If $B$ contains a fan $\{u\}\vee P_{16}$, then all $3$-faces must intersect at least one vertex of $\{ u, u_1, u_2, u_3, u_4 \}$, $\{ u, u_5, u_6, u_7, u_8 \}$, $\{ u, u_9, u_{10}, u_{11}, u_{12} \}$ and $\{ u, u_{13}, u_{14}, u_{15}, u_{16} \}$, where $P_{16}=u_1u_2\ldots u_{16}$. Thus, all $3$-faces intersect $u$, which means that $B$ is a wheel or a fan with the centre $u$. So we may assume that $B$ is $(K_1\vee P_{16})$-free.

    Since $d_B(v)\geq 106$, all $3$-faces incident to $v$ can be partitioned into $t$ fans $\{F^i:i\in[t]\}$ such that the centre of each $F^i$ is $v$. See Figure~\ref{partition} as an example.

    \begin{figure}[H]
        \centering
        \includegraphics[width=0.35\linewidth]{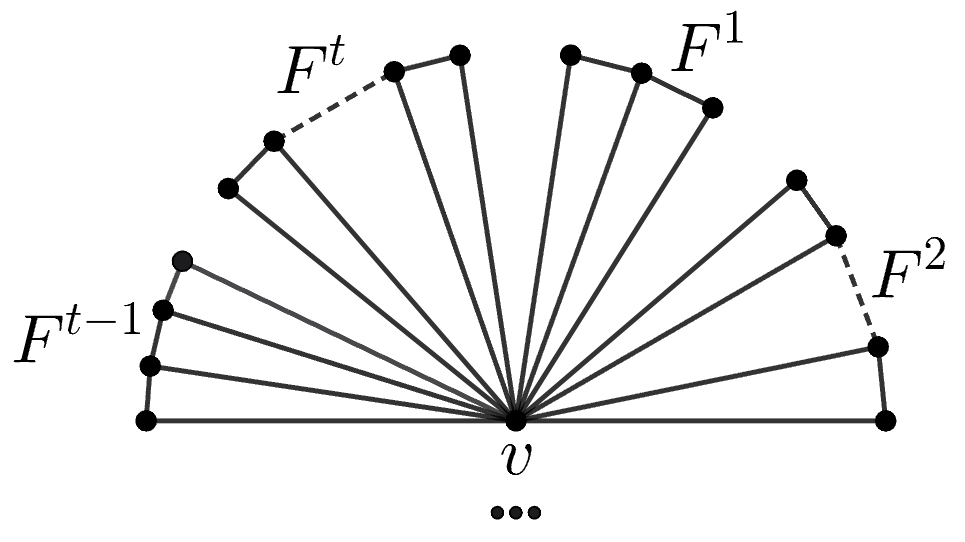}
        \caption{The partition of $3$-faces incident to $v$}
        \label{partition}
    \end{figure}

    Recall that $B$ is $(K_1\vee P_{16})$-free, we have $t\geq 8$.
    All $5$-cycles in $B$ must contain $v$, otherwise there is a $5$-cycle $C^*$ containing no $v$. Then $C^*$ intersects at least one vertex of $F^i$ for each $i\in [t]$, a contradiction. Thus we may fix a $5$-cycle $C$ containing $v$. Then there are at least $(t-4)$ fans, say $F^1,F^2,...,F^{t-4}$, containing none vertex of $C\backslash v$. Since $B$ is a triangular block, each $F^i$ must be extended by $3$-faces. These extended $3$-faces could not intersect with $v$ since all $3$-faces incident to $v$ are determined. Since $B$ is $C_3\cup C_5$-free, these extended $3$-faces must intersect with $C$. See Figure~\ref{partition_extend} as an example.
    It follows from the planarity of $B$  that there are at most three $F^i$'s such that each of them has at least $2$ neighbors in $C\backslash v$. Now there still exists at least one fan $F^j$ such that $F^j$ has only one neighbor in $C\backslash v$. Let $u_1u_2w$ be the extended $3$-face of $F^j$ and $w\in C\backslash v$, as shown in Figure~\ref{partition_extend}.

    To ensure that $B$ is a triangular block, $u_1w$ or $u_2w$ is belonging to two $3$-faces. Without loss of generality, let $u_1wx$ be a $3$-face and $x\notin C$ by the choice of $F^j$. But the known edges could not form a triangular block, one of $\{u_1x, wx, u_2w\}$ should belong to two $3$-faces. No matter who is belonging to two $3$-faces, let $y$ be the new vertex in the new $3$-face with $y\neq v$. Then $u_1$, $u_2$, $w$, $x$ and $y$ form a $5$-cycle containing no $v$, a contradiction.
\end{proof}

\begin{figure}[H]
    \centering
    \includegraphics[width=0.3\linewidth]{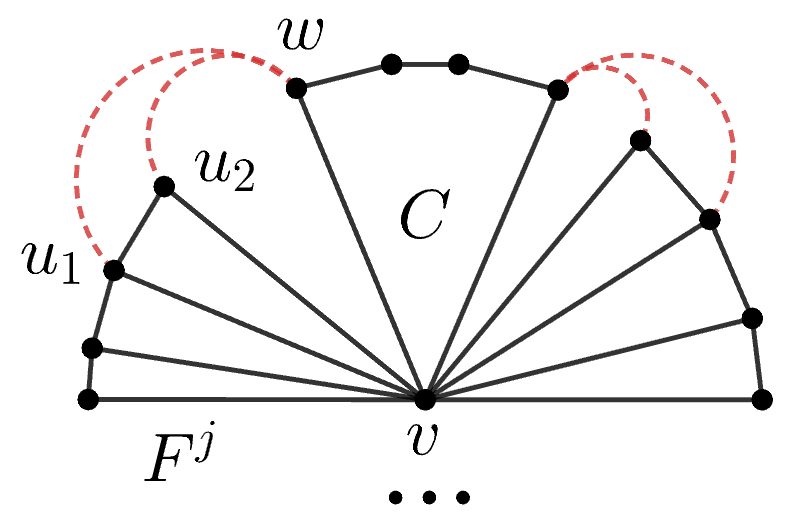}
    \caption{Each part should extend $3$-faces intersecting with $C$}
    \label{partition_extend}
\end{figure}

\begin{lemma}\label{lemma5}
    If one of the statements (2) and (3) in Lemma~\ref{lemma1} holds, then all $3$-cycles in $G$ intersect one vertex.
\end{lemma}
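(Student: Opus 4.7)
The plan is to treat the two hypotheses of Lemma~\ref{lemma1} separately. In each case I identify a candidate vertex $v$ and argue that every triangle of $G$ contains $v$ by contradiction: suppose a triangle $T$ with $v\notin V(T)$ and produce a $C_5$ in $G$ vertex-disjoint from $T$, which together with $T$ contradicts the $C_3\cup C_5$-freeness.

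Under statement (2) of Lemma~\ref{lemma1}, Lemma~\ref{lemma4} gives that the block $B$ with $|V(B)|\geq 521$ is a wheel or a fan with hub $c$; set $v=c$. Since the rim of $B$ is a cycle or path on at least $520$ vertices, it contains no $5$-cycle, so every $C_5$ in $B$ must be of the form $c,u_i,u_{i+1},u_{i+2},u_{i+3}$ for four consecutive rim vertices. If $T$ is a triangle with $c\notin V(T)$, then $V(T)$ forbids at most three rim vertices; these split the rim into at most four arcs of total length $\geq 517$, and a pigeonhole yields an arc containing four consecutive unforbidden vertices. Together with $c$ they form a $C_5\subseteq B$ disjoint from $T$, giving the forbidden $C_3\cup C_5$.

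Under statement (3) of Lemma~\ref{lemma1}, let $B_1,\ldots,B_4$ be the four triangular blocks containing $C_5$ with $V(B_i)\cap V(D)=\{v\}$, and pick a $5$-cycle $D_i\subseteq B_i$. Then $V(D_i)\cap V(D)\subseteq V(B_i)\cap V(D)=\{v\}$ automatically. The core of the plan is to arrange that $V(D_i)\cap V(D_j)\subseteq\{v\}$ for all $i\neq j$ as well. Once this is done, if $T$ is a triangle with $v\notin V(T)$ then $T$ must meet each of the five cycles $D,D_1,\ldots,D_4$ (else we exhibit a disjoint $C_5$), yet each vertex of $T$ lies in at most one of them (pairwise intersections are $\{v\}$ and $v\notin V(T)$). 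The three vertices of $T$ therefore cover at most three cycles, leaving at least one disjoint from $T$ and the desired $C_3\cup C_5$.

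The hard part will be the pairwise disjointness step in the second case. Two triangular blocks are forbidden to share an edge but may share a second vertex $w\neq v$, and such a $w$ lying in both $D_i$ and $D_j$ would spoil the pigeonhole. I plan to combine planarity---the four blocks sit as cyclically arranged ``petals'' at $v$, and a common extra vertex $w$ imposes a tight $\{v,w\}$-separator configuration---with Lemma~\ref{lemma2}, which lets one reselect the $5$-cycle inside a good block to miss a prescribed vertex. The residual cases where $B_i$ is one of the small or bad blocks listed in Lemma~\ref{lemma3}, namely $\{B_4^1,B_5^1,B_5^2,B_5^3,B_6^1,B_6^2\}$, should be checked by direct inspection. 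This casework, rather than the pigeonhole itself, is where the bulk of the work lies.
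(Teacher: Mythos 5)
Your treatment of statement (2) is correct and coincides with the paper's: by Lemma~\ref{lemma4} the huge block is a fan or wheel with hub $c$, every $C_5$ in it consists of $c$ plus four consecutive rim vertices, and a triangle avoiding $c$ excludes at most three rim vertices, so a $C_5$ disjoint from it survives. That half needs no changes.

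For statement (3), however, what you have written is a plan, not a proof. You correctly identify the pigeonhole skeleton (which is also what the paper uses, in a single sentence), and you correctly observe that it requires $V(D_i)\cap V(D_j)\subseteq\{v\}$, which is not automatic because two triangular blocks may share a second vertex besides $v$. But you then defer the resolution to unexecuted ``casework'' that you yourself call the bulk of the work, so the proof is incomplete exactly at its load-bearing step. Moreover, the casework you propose is anchored to the list $\{B_4^1,B_5^1,B_5^2,B_5^3,B_6^1,B_6^2\}$ from Lemma~\ref{lemma3}; that lemma classifies the blocks through the \emph{two} vertices $u,v$ in the setting of statement (1) of Lemma~\ref{lemma1} and gives you no information about the blocks arising in statement (3), so that route is not available.

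The gap does close, and far more cheaply than your plan suggests. Each $3$-face of $B_i$ is a triangle of $G$, hence must meet the $5$-cycle $D$; since $V(B_i)\cap V(D)=\{v\}$, every $3$-face of $B_i$ contains $v$, which forces each $B_i$ to be a fan or wheel centred at $v$. In such a block every vertex other than $v$ is adjacent to $v$, so a common vertex $w\neq v$ of $B_i$ and $B_j$ would place the edge $vw$ in two distinct triangular blocks, contradicting the fact that the blocks partition $E(G)$. Hence $V(B_i)\cap V(B_j)=\{v\}$, the $D_i\setminus\{v\}$ are pairwise disjoint and disjoint from $V(D)\setminus\{v\}$, and your pigeonhole goes through verbatim. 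Your instinct that something must be said here is sound --- the paper itself dispatches this case by assertion --- but as submitted your argument does not yet establish the lemma.
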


\begin{proof}
    If the statement (2) of Lemma~\ref{lemma1} holds, then $G$ has a triangular block $B$ with $t\geq 521$ vertices. By Lemma~\ref{lemma4}, $B$ is a wheel $\{u\}\vee P_{t-1}$ or a fan $\{u\}\vee C_{t-1}$, where $P_{t-1}=u_1u_2\cdots u_{t-1}$ and $C_{t-1}=u_1u_2\cdots u_{t-1}u_1$. Note that each $3$-cycle must intersect one vertex of $\{v, u_1, u_2, u_3, u_4 \}$, $\{v, u_5, u_6, u_7, u_8 \}$, $\{v, u_9, u_{10}, u_{11}, u_{12} \}$ and $\{v, u_{13}, u_{14}, u_{15}, u_{16} \}$, respectively. So all $3$-cycles in $G$ intersect $v$.

    If the statement (3) of Lemma~\ref{lemma1} holds, then there are four triangular blocks containing $C_5$ and intersecting exact one vertex $v$. Then all $3$-cycles in $G$ intersect $v$, otherwise we could find a $C_3\cup C_5$ in $G$, a contradiction.
\end{proof}

\section{Proof of Theorem~\ref{thm}}

Let $G$ be an $n$-vertex $C_3\cup C_5$-free plane graph with $e(G)>\frac{8n-13}{3}$ and $n\geq 295660$. If Lemma~\ref{lemma1} (2) or (3) holds, then by Lemma~\ref{lemma5}, all $3$-cycles in $G$ intersect one vertex $v$. Thus, all $3$-faces in $G$ incidenct with $v$.
Note that $f_3\leq n-1$.
By Euler's formula, we have
$$2e = \sum_{i\geq 3}if_i \geq 3f_3+4(f-f_3)=4f-f_3 \geq 4(e+2-n)-(n-1),$$
which implies $e \leq \frac{5n-9}{2} < \frac{8n-13}{3}$, a contradiction.

From Lemma~\ref{lemma1}, $G$ has at least $3$ triangular blocks containing $C_5$ and intersecting exact two vertices $u$ and $v$. 
Since $G$ is $C_3\cup C_5$-free, $u$ and $v$ belong to every triangular block containing $C_5$. 
Let $G'=G-e_{uv}$ if $e_{uv}$ exists and $G'=G$ otherwise.  Assume that $B_1,\ldots,B_t$ are all triangular blocks of $G'$.
By Lemma~\ref{lemma3}, $f_3(B_i) \leq e(B_i)/2$ with equality holds when $B_i\in \{B_5^2,B_5^3\}$ for $i\in [t]$.
Thus,
$$e(G') = \sum_{i\in [t]}e(B_{i}) \geq 2\sum_{i\in [t]}f_3(B_{i}) = 2f_3(G').$$
Then by Euler's formula, we have
$$2e(G') = \sum_{i\geq 3}if_i(G') \geq 3f_3(G')+4(f(G')-f_3(G'))=4f(G')-f_3(G') \geq 4(e(G')+2-n)-\frac{e(G')}{2},$$
which implies $e(G') \leq \frac{8n-16}{3}$. Then $e(G) \leq \frac{8n-13}{3}$, a contradiction. 
The proof is finally complete. $\hfill\qedsymbol$

\begin{figure}[H]
    \centering
    \includegraphics[width=0.8\linewidth]{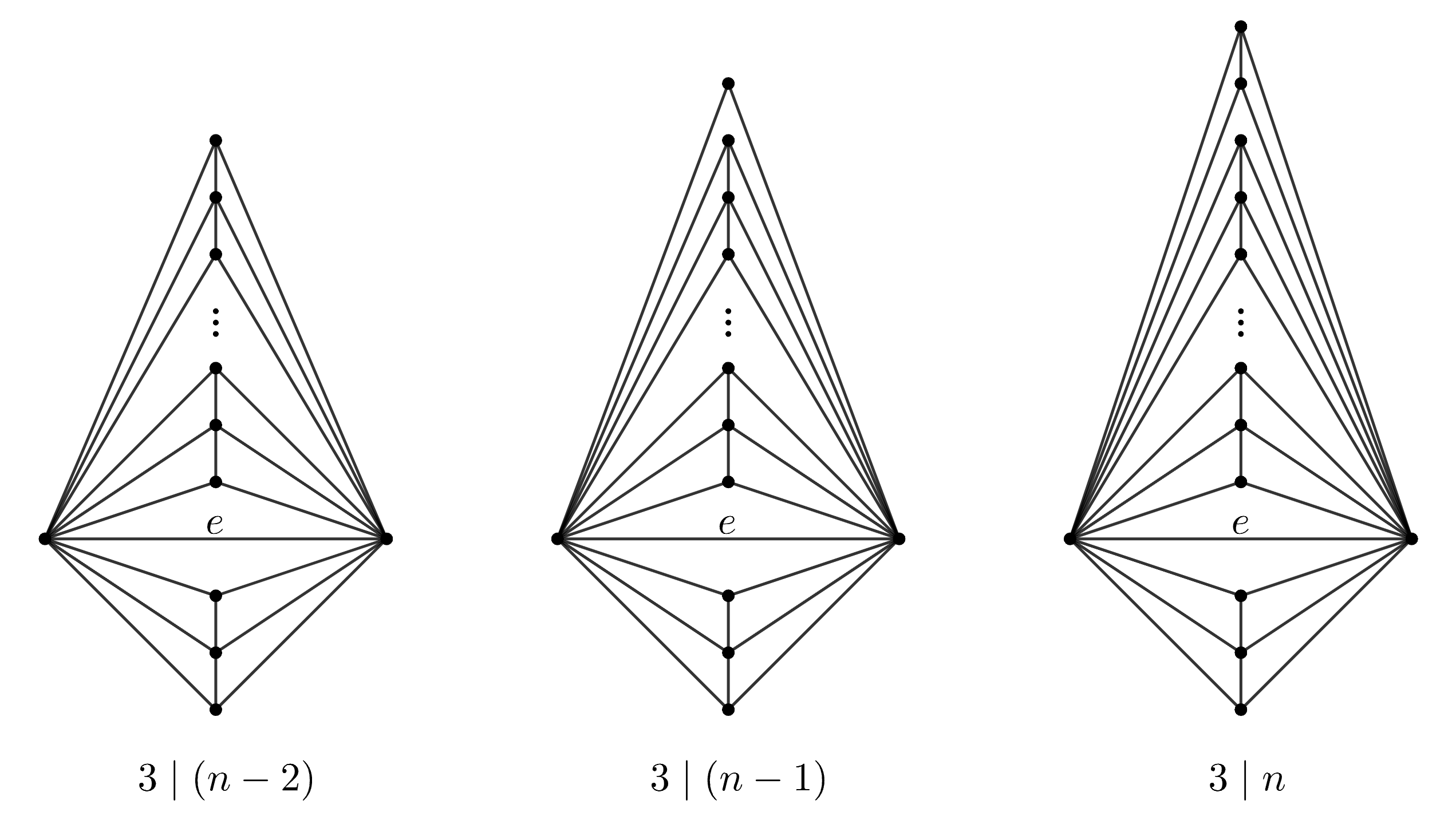}
    \caption{Extremal graphs of Theorem~\ref{thm}}
    \label{extremal_graphs}
\end{figure}

Now let $G$ be an $n$-vertex $C_3\cup C_5$-free plane graph with $e(G)=\frac{8n-13}{3}$ and $3 \mid (n-2)$. Through the discussion above, there must exist an edge $e=\{u,v\}$ and all triangular blocks in $G-e$ are $B_5^2$ or $B_5^3$. If there exists a $B_5^3$, then there is a triangle inside this $B_5^3$ that does not intersect with the $5$-cycle containing $u$ and $v$, leading to a contradiction. So, the unique extremal graph is shown in the first one of Figure~\ref{extremal_graphs} when $n$ is large enough. For the case where $3 \mid (n-1)$ or $3 \mid n$, by a similar discussion we could get that the remaining graphs in Figure~\ref{extremal_graphs} are the only extremal graphs for each case.

\section{Acknowledgments}

This work was supported by the National Key R\&D Program of China (No. 2023YFA1009602) and the National Natural Science Foundation of China (Grant No. 12231018).

\bibliographystyle{abbrv}
\bibliography{main}
~~\\
~~\\
~~\\

(Luyi Li) Academy of Mathematics and Systems Science, Chinese Academy of Sciences, Beijing 100190, China. 
{\bf Email address:} \href{liluyiplus@gmail.com}{liluyiplus@gmail.com}\\

(Ping Li) School of Mathematics and Statistics, Shaanxi Normal University, Xi'an, Shaanxi 710062, China. 
{\bf Email address:} \href{lp-math@snnu.edu.cn}{lp-math@snnu.edu.cn}\\

(Guiying Yan) Academy of Mathematics and Systems Science, Chinese Academy of Sciences, and University of Chinese Academy of Sciences, Beijing 100190, China.
{\bf Email address:} \href{yangy@amss.ac.cn}{yangy@amt.ac.cn}\\

(Qiang Zhou) Academy of Mathematics and Systems Science, Chinese Academy of Sciences, and University of Chinese Academy of Sciences, Beijing 100190, China. {\bf Email address:} \href{zhouqiang2021@amss.ac.cn}{zhouqiang2021@amss.ac.cn}\\

\end{document}